\documentclass{amsart}

\usepackage{amsmath, mathrsfs, amssymb,amsthm,hyperref,centernot,comment,stmaryrd,enumerate,cancel,mathtools}

\hypersetup{pdfstartview={XYZ null null 1.25}}
\usepackage[all]{xy}

\newcommand{\bv}{\bigvee}
\newcommand{\bw}{\bigwedge}

\newcommand{\cF}{\mathcal F}
\newcommand{\cI}{\mathcal I}

\newcommand{\cU}{\mathcal U}
\newcommand{\cD}{\mathcal D}

\newcommand{\A}{\mathcal A_{\cF,\cI}}
\newcommand{\cA}{\mathcal{A}}

\newcommand{\Pos}{\mathsf{Pos}}
\newcommand{\FUD}{\mathrm{F}(\cU,\cD)}

\newcommand{\rank}{\mathsf{rank}}

\DeclareMathOperator{\R}{R}

\theoremstyle{plain}
\newtheorem{thm}{Theorem}[section]
\newtheorem{prop}[thm]{Proposition}

\newtheorem{lemma}[thm]{Lemma}
\newtheorem{ex}[thm]{Example}
\theoremstyle{definition}
\newtheorem{defn}[thm]{Definition}

%\includecomment{comment}

\title{Amalgamating poset extensions and generating free lattices}
\author{Rob Egrot}
\date{}
\address{Faculty of ICT, Mahidol University, 999 Phutthamonthon 4 Rd, Salaya, Nakhon Pathom 73170, Thailand}
\email{robert.egr@mahidol.ac.th}
\keywords{Canonical extension, free lattice generated by a poset, poset extension}
\subjclass[2020]{Primary 06B25, 06B23.}
\begin{document}
\begin{abstract}
We investigate connections between the free lattice generated by a poset while preserving certain bounds and the canonical extension of a poset. Explicitly, we describe how the free lattice generated by a poset while preserving certain bounds can be constructed as a colimit of `intermediate structures' as they occur in the construction of a canonical extension of a poset. 
\end{abstract}
\maketitle
\section{Introduction}
A standard technique for constructing the canonical extension of a poset $P$ is to take the sets of all filters and ideals of $P$, and then to define an antitone Galois connection between their powersets using the relation of non-empty intersection. The canonical extension is then the complete lattice of stable sets of filters. This constructive method appeared in \cite{GehHar01} for lattices, and was explicitly applied to construct canonical extensions for posets in \cite{DGP05}, though the technique first appeared in \cite{Tun74}, albeit using different terminology. 

As discussed in \cite[Remark 2.3]{DGP05}, the meanings of the terms `filter' and `ideal' are important here, as definitions that are equivalent for lattices diverge in the more general setting. The effect of varying these definitions on the canonical extension construction is investigated in \cite{Mor14}. 

Going further, it is not necessary to restrict to the sets of \emph{all} filters and ideals, however they are defined, or even to the relation of non-empty intersection. Going down this path leads \cite{MorVanA18} to define canonical extensions relative to a choice of a set of filters and a set of ideals. If we abandon explicit reference to filters, ideals and non-empty intersection altogether, but keep the essential ingredients of the Galois connection construction, we arrive at the generality of $\Delta_1$-completions \cite{GJP13}. This class of completions includes both canonical extensions and MacNeille completions (see e.g. \cite{Mac37,BanBru67}), and is defined to include all completions in which the embedded image of the base poset is \emph{doubly dense} (i.e. every element of the completion is both a join of meets and a meet of joins of subsets from this image). 

The basis of the construction of a $\Delta_1$-completion of a poset $P$ is a triple $(\cF,\cI,\R)$, where $\cF$ and $\cI$ are, respectively, sets of `filters' and `ideals' of $P$ (understood very generally), and $\R\subseteq \cF\times \cI$ is a binary relation. There is a 1-1 correspondence between $\Delta_1$-completions of a poset $P$ and polarities with certain properties (see \cite[Theorem 3.4]{GJP13} for the details, or \cite[Section 7]{Egr20} for a more general result).

If $d:P\to C$ is the $\Delta_1$-completion resulting from  polarity $(\cF,\cI,\R)$, there are natural embeddings of $\cF$ and $\cI$ into $C$. This induces a natural order on $\cF\cup \cI$, producing what is often referred to as the \emph{intermediate structure}. It turns out that the inclusion and reverse-inclusion orders on $\cI$ and $\cF$ respectively agree with the orders induced by $C$ on $\cF\cup \cI$. Thus the intermediate structure is an amalgam of $\cF$ and $\cI$, understood as posets, into a common extension of $P$, using the relation $\R$ as a kind of glue for the two pieces. See \cite[Section 1.3]{vG09} for a discussion of this for a quite general definition of `canonical extension', and \cite{GJP13}, particularly Section 3, for the details in the general setting of $\Delta_1$-completions.  

This intermediate structure can, for the relation of non-empty intersection, be thought of as the `free' way to amalgamate the posets $\cF$ and $\cI$, and comes with a universal property (see \cite[1.4.2]{vG09} and \cite[7.30]{Egr20}). So a canonical extension, for example, is obtained by `freely' combining the chosen $\cF$ and $\cI$, and then completing via the MacNeille completion.

Continuing with the theme of `freeness', given a set $X$, Whitman investigated the \emph{free lattice generated by $X$}, and defined an algorithm for solving the associated word problem \cite{Whi41,Whi42}. Given a poset $P$ we can define the \emph{free lattice generated by $P$ while preserving certain bounds} (see Definition \ref{D:free}). The original construction is due to Dean \cite{Dea64}, and significantly cleaner approach is given by Lakser \cite{Lak12}. Both techniques involve first constructing the `term algebra' of words over $P$, defining a quasiorder over it, and then taking the induced poset to obtain the appropriate free lattice. The advantage of Lakser's approach lies in the definition of the quasiorder. In particular, Lakser replaces Dean's somewhat involved recursive definition with what he calls the \emph{covering condition} \cite[Definition 2]{Lak12}. In this covering condition we see what amounts to the familiar relation of non-empty intersection between filters and ideals.

This raises questions about the relationship between the intermediate structure that appears in the canonical extension construction and the free lattice generated by a poset while preserving certain bounds. Intuitively, we can imagine building this free lattice step by step. First we would add new elements corresponding to joins and meets of subsets of $P$, taking care not to interfere with any of the bounds we wanted to preserve. This would almost certainly not be a lattice, as there would likely be finite subsets of the newly constructed poset without defined joins and meets. Thus we would add more elements corresponding to joins and meets of finite subsets of the poset we constructed in the first stage. This time we would be careful not to interfere with the joins and meets we added the first time. Again, the result of this would likely not be a lattice, but we could keep repeating the process of adding joins and meets indefinitely. The free lattice would be obtained `in the limit' so to speak.

It turns out that this can actually be done. Explicitly, given a poset $P$ we can define a set of `filters' corresponding to the meet structure we want to add, and a set of `ideals' corresponding to the join structure we want to add, and the intermediate structure from the canonical extension construction corresponds to the poset plus added joins and meets. By repeating this process with appropriate further choices, we produce a chain of posets embedding into each other. The desired free lattice can then be constructed by taking the colimit. The details of this are given in Section \ref{S:build}, building on some background results provided in Section \ref{S:background}.  

Finally, in Section \ref{S:approx} we connect the intermediate stages of this construction with a notion of complexity and prove that each stage is, in a sense, a kind of `free' construction (see Theorem \ref{T:rel-free}). To conclude the paper we give an example showing that the `canonical form' theorem for free lattices over sets does not generalize to free lattices over posets preserving certain bounds (Example \ref{E:canon}).

\section{Preliminaries}\label{S:background}
First a little notation. Given a poset $P$ and an element $p\in P$, we define $p^\uparrow = \{p'\in P: p'\geq p\}$, and we define $p^\downarrow$ dually. Given a function $f:X\to Y$ between sets and $Y'\subseteq Y$, we define $f^{-1}(Y') = \{x\in X: f(x)\in Y'\}$. Given $X'\subseteq X$ we define $f[X'] = \{f(x):x\in X'\}$.

\subsection{Free lattices}
To discuss free lattices preserving bounds we first need a way to specify the bounds we wish to preserve. This is done via the following definition.

\begin{defn}\label{D:spec}
Let $P$ be a poset. Let $\cU$ be a subset of $\wp(P)$. Then $\cU$ is a \textbf{join-specification} (of $P$) if it satisfies the following conditions:
\begin{enumerate}[(1)]
\item $\bv S$ exists in $P$ for all $S\in \cU$, and
\item $\{p\}\in \cU$ for all $p\in P$.
\end{enumerate}
A \textbf{meet-specification} is a subset $\cD$ of $\wp(P)$ satisfying (2) and the dual of (1). Given a join-specification $\cU$ we define the \textbf{radius} of $\cU$ to be the smallest cardinal $\sigma$ such that $\sigma> |S|$ for all $S\in\cU$. The radius of a meet-specification is defined dually.
\end{defn}

\begin{defn}[$(\cU,\cD)$-morphism]
Let $f:P\to Q$ be an order-preserving map between posets. Let $\cU$ and $\cD$ be join- and meet-specifications of $P$ respectively. Then $f$ is a $\cU$-morphism if whenever $S\in \cU$ we have $f(\bv S) = \bv f[S]$. Similarly, $f$ is a $\cD$-morphism if whenever $T\in\cD$ we have $f(\bw T) = \bw f[T]$. If $f$ is both a $\cU$-morphism and a $\cD$-morphism then we say it is a $(\cU,\cD)$-morphism. If $f$ is a $\cU$-morphism that is also an order-embedding then we say it is a $\cU$-embedding, and we make similar definitions for $\cD$- and $(\cU,\cD)$-embeddings. 
\end{defn}

\begin{defn}[$\cU$-ideal, $\cD$-filter]\label{D:Uideal}
Let $P$ be a poset, and let $\cU$ and $\cD$ be join- and meet-specifications of $P$ respectively. Then a $\cU$-ideal of $P$ is a downset that is closed under joins from $\cU$, and a $\cD$-filter of $P$ is an upset that is closed under meets from $\cD$. Given a cardinal $\alpha$, we say a $\cU$-ideal or $\cD$-filter of $P$ is $\alpha$-\textbf{generated} if it is the smallest $\cU$-ideal/$\cD$-filter containing $S$ for some $S\subseteq P$ with $|S|<\alpha$. For $\alpha = \omega$ we just say \textbf{finitely generated}. 
\end{defn}

The next lemma proves that inverse images of $(\cU, \cD)$-morphisms produce $\cU$-ideals and $\cD$-filters.

\begin{lemma}\label{L:ideals}
If $h:P\to Q$ is a $(\cU,\cD)$-morphism, then, for all $q\in Q$, \/ $h^{-1}(q^\downarrow)$ is a $\cU$-ideal and $h^{-1}(q^\uparrow)$ is a $\cD$-filter.
\end{lemma}
\begin{proof}
Let $S\in \cU$ and suppose $S\subseteq h^{-1}(q^\downarrow)$. Then $q$ is an upper bound for $h[S]$, and as $h$ is a $(\cU,\cD)$-morphism it follows that $h(\bv S)\leq q$. Since $h^{-1}(q^\downarrow)$ is clearly a downset, it is thus a $\cU$-ideal. The rest follows by duality.
\end{proof}

The next lemma will be important later. The idea is that when $h$ is a $(\cU, \cD)$-morphism and $I$ is a $\cU$-ideal, we find e.g. $\bv h[I]$ by calculating $\bv h[S]$ for a subset $S$ of $I$ that generates $I$. The value of this is that $S$ may be finite, proving that the infinite join $\bv h[I]$ must exist in e.g. a lattice.

\begin{lemma}\label{L:S}
Let $h:P\to Q$ be a $(\cU,\cD)$-morphism, let $I$ be the smallest $\cU$-ideal of $P$ containing $S$, and suppose $\bv h[S]$ exists in $Q$. Then  $\bv h[S] = \bv h[I]$. Similarly, if $F$ is the smallest $\cD$-filter containing $S$ and $\bw h[S]$ exists in $Q$, then $\bw h[S] = \bw h[F]$.
\end{lemma}
\begin{proof}
Let $x\in L$ and suppose that $x$ is an upper bound for $h[S]$. By Lemma \ref{L:ideals}, $h^{-1}(x^\downarrow)$ is a $\cU$-ideal of $P$. As $h^{-1}(x^\downarrow)$ contains $S$, it follows that $I\subseteq h^{-1}(x^\downarrow)$, and thus that $x$ is an upper bound for $h[I]$. In particular, $h(\bv S)$ is an upper bound for $h[I]$. As $\bv h[S] = h(\bv S)$ and $S\subseteq I$, we obtain $\bv h[S] = \bv h[I]$ as required. The rest is dual. 
\end{proof}

\begin{defn}[$\FUD$]\label{D:free}
Let $P$ be a poset, and let $\cU$ and $\cD$ be join- and meet-specifications of $P$ respectively, both with radius at most $\omega$. The \emph{lattice freely generated by $P$ while preserving joins from $\cU$ and meets from $\cD$} is a lattice $\FUD$ such that there is a $(\cU,\cD)$-embedding $e:P\to \FUD$ and such that, whenever $L$ is a lattice and $f:P\to L$ is a $(\cU,\cD)$-morphism, there is a unique lattice homomorphism $u:\FUD\to L$ such that the diagram in Figure \ref{F:free} commutes. 
\end{defn}

\begin{figure}[htbp]
\[\xymatrix{ 
P\ar[r]^{e\phantom{xxxx}}\ar[d]_f & \FUD\ar[dl]^u \\
L
}\] 
\caption{The universal property of $\FUD$}
\label{F:free}
\end{figure}

$\FUD$ always exists, and is unique up to isomorphism fixing $P$ as, demonstrated by the explicit constructions of \cite{Dea64} and \cite{Lak12}.

\subsection{Canonical extensions}\label{S:can-ext}
In \cite{DGP05}, the canonical extension of a poset $P$ was defined in terms of the sets of its up-directed downsets (called \emph{ideals} in that paper), and down-directed upsets (called \emph{filters}). As noted in \cite[Remark 2.3]{DGP05}, this choice of definition for ideal and filter is somewhat arbitrary, and there are others that also agree with the lattice version as used in \cite{GehHar01}. For example, \cite{MorVanA18} defines filters to be upsets closed under existing finite meets, and defines ideals dually. This paper also generalizes the definition of canonical extension by defining it relative to a set $\cF$ of filters and a set $\cI$ of ideals, provided the pair $(\cF,\cI)$ satisfies certain conditions. Thus we can speak of `the canonical extension of $P$ with respect to $(\cF,\cI)$'.

Generalizing further, we can relax the conditions on $\cF$ and $\cI$ to allow the former to be any standard collection of upsets, and the latter to be any standard collection of downsets. Here a \textbf{standard} collection of upsets of $P$ is one that contains all the principal upsets, and the definition for downsets is dual. 

\begin{defn}\label{D:canon} A \textbf{canonical extension of $P$ with respect to $(\cF,\cI)$} is a completion $e:P\to C$ such that the following all hold:
\begin{enumerate}
\item $e$ is \textbf{$(\cF,\cI)$-dense}, by which we mean that given $z\in C$, we have 
\begin{align*}z &= \bv\{ \bw e[F]: F\in\cF \text{ and } \bw e[F]\leq z \}\\&=\bw\{ \bv e[I]: I\in\cI \text{ and } \bv e[I]\geq z \}.\end{align*}  
\item $e$ is \textbf{$(\cF,\cI)$-compact}, by which we mean that whenever $F\in\cF$ and $I\in\cI$, if $\bw e[F]\leq\bv e[I]$ we must have $F\cap I\neq \emptyset$.  
\end{enumerate}  
\end{defn}  

Definition \ref{D:canon} corresponds to that of an $(\cF,\cI)$-completion from \cite[Definition 5.9]{GJP13}, and specializes, after a little fiddling, to the definitions of the canonical extension from \cite[Section 4]{MorVanA18} and \cite[Definition 2.2]{DGP05} by restricting the possible choices of $\cF$ and $\cI$.

Given a poset $P$ and standard sets of upsets and downsets $\cF$ and $\cI$, the canonical extension of $P$ with respect to $(\cF,\cI)$ is unique up to isomorphism, and can be constructed by first amalgamating $\cF$ and $\cI$ (see Definition \ref{D:amal} below), and then taking the MacNeille completion of the resulting poset. See \cite{GJP13}, in particular Theorems 5.10 and 3.4 for proofs applicable to the general setting we are using here. 

\begin{defn}[$\A$, $\pi$, $\tau$, $i$, $f$, $\gamma$]\label{D:amal}
Let $P$ be a poset and let $\cI$ and $\cF$ be standard sets of downsets and upsets of $P$, respectively. Define $\A$ by taking the union $\cF \cup \cI$ and adding the partial order structure induced by the following quasiordering:
\begin{enumerate}
\item For $F_1,F_2\in \cF$, \/ $F_1\leq F_2 \iff F_1\supseteq F_2$.
\item For $I_1,I_2\in \cI$, \/ $I_1\leq I_2 \iff I_1\subseteq I_2$.
\item For $I\in \cI$ and $F\in \cF$:
\begin{enumerate}
\item $F\leq I \iff F\cap I\neq \emptyset$.
\item $I\leq F \iff$ for all $p,q \in P$, if $p\in I \text{ and } q \in F$, then $p\leq q$.
\end{enumerate}
\end{enumerate}
There are maps $\pi:\cI\to\A$ and $\tau:\cF\to\A$ induced by the respective inclusions of $\cI$ and $\cF$ into $\cF\cup \cI$. Define $i:P\to \cI$ and $f:P\to \cF$ by $p\mapsto p^\downarrow$ and $p\mapsto p^\uparrow$ respectively. Define $\gamma:P\to \A$ by $\gamma = \pi\circ i = \pi\circ f$.
\end{defn}

The following lemma collects together some useful properties of the maps from the previous definition.
\begin{lemma}\label{L:gamma}
Let $i$, $f$, $\pi$, $\tau$ and $\gamma$ be as in Definition \ref{D:amal} for some choice of $(\cF,\cI)$. Then:
\begin{enumerate}
\item $\pi$ is a completely join-preserving order-embedding.
\item $\tau$ is a completely meet-preserving order-embedding.
\item $\gamma$ is an order-embedding.
\item If $S\subseteq P$ and $\bv S$ exists in $P$, then 
\[\gamma(\bv S) = \bv \gamma[S] \iff i(\bv S)=\bv i[S].\]
\item If $T \subseteq P$ and $\bw T$ exists in $P$, then 
\[\gamma(\bw T) = \bw \gamma[T] \iff f(\bw T)= \bw f[T].\]
\end{enumerate}
\end{lemma}
\begin{proof}
$\pi$ is obviously an order embedding, as by definition $I_1\leq I_2 \iff I_1\subseteq I_2$. Now, let $X\subseteq \cI$, and suppose $\bv X$ exists in $\cI$. Then $\bv X$ is the smallest element of $\cI$ containing $\bigcup X$. Let $F\in\cF$ with $I\leq F$ for all $I\in X$, and let $q\in F$. Then for all $p\in \bigcup X$ we have $p\leq q$, and so $p^\downarrow \subseteq q^\downarrow$. So $q^\downarrow$ is an upper bound for $X$ in $\cI$, and so $\bv X \subseteq q^\downarrow$. This is true for all $q\in F$, so $\bv X\leq F$, by definition of the order on $\A$. This proves (1), and (2) is dual.

For (3), that $\gamma$ is an order-embedding is immediate as it is the composition of two order-embeddings. For (4), note that $\gamma = \pi\circ i$ and $\pi$ is completely join-preserving. The argument for (5) is dual.
\end{proof}

Composing $\gamma$ with the MacNeille completion $d$ of $\A$ produces $d\circ \gamma$, which is the canonical extension of $P$ with respect to $(\cF,\cI)$.

When $\cI$ and $\cF$ are standard sets of $\cU$-ideals and $\cD$-filters, respectively, the maps $i$, $f$ and $\gamma$ preserve the specified joins and meets, as made precise in the following lemma. 

\begin{lemma}\label{L:gamma2}
Let $\cU$ be a join-specification, and let $S\in \cU$. Then $i(\bv S) = \bv i[S]$ and $\gamma(\bv S) = \bv \gamma[S]$.
Similarly, if $\cD$ is a meet-specification and $T\in \cD$, then $f(\bw T) = \bw f[T]$ and $\gamma(\bw T) = \bw \gamma[T]$. 
\end{lemma}
\begin{proof}
First, $i(\bv S) = (\bv S)^\downarrow$ is a $\cU$-ideal containing $S$. Moreover, any $\cU$-ideal containing $S$ must contain $\bv S$, by virtue of being a $\cU$-ideal. Thus $i(\bv S)$ is the smallest $\cU$-ideal containing $S$, and so is $\bv i[S]$. The argument for $f$ is dual. The claims for $\gamma$ then follow by Lemma \ref{L:gamma}(4).
\end{proof}

\subsection{Directed colimits in the category of posets}\label{S:lims}

Define $\Pos$ to be the category of posets with order-preserving maps. Define $\Pos_e$ to be the category of posets and order-embeddings. We present the following definition, primarily to fix a notation. Details and background can be found in e.g. \cite{MacL98}.

\begin{defn}
If $I$ and $\mathscr C$ are categories, and if $F:I\to \mathscr C$ is a functor, then a \textbf{colimit} for $F$ is a pair $(L, \{f_i:i\in I\})$ such that $L$ is an object of $\mathscr C$, and $f_i$ is a map from $Fi$ to $L$ for all objects $i\in I$ such that:
\begin{enumerate}
\item If $g:i\to j$ is a map in $I$ then $f_i = f_j\circ Fg$.
\item If $C\in\mathscr C$ and for each $i\in I$ there is $h_i: Fi\to C$ such that $h_i = h_j\circ Fg$ for all $i,j\in I$ and all maps $g:i\to j$, then there is a unique map $u:L\to C$ such that the diagram in Figure \ref{F:colim} commutes, for all $i,j,g$.
\end{enumerate}
\end{defn} 

\begin{defn}
A poset is \textbf{directed} if every pair of elements has an upper bound.
\end{defn}

\begin{figure}[htbp]
\[\xymatrix{ 
Fi\ar[dd]_{Fg}\ar[dr]^{f_i}\ar@/^1pc/[drr]^{h_i} \\
& L\ar[r]^u & C\\
Fj\ar[ur]_{f_j}\ar@/_1pc/[urr]_{h_j}
}\] 
\caption{Commuting diagram for a colimit}
\label{F:colim}
\end{figure}

\begin{prop}\label{P:coLim} 
Let $I$ be a directed poset considered as a category. If $i\leq j$ in $I$ we denote the map from $i$ to $j$ in $I$ by $g_{ij}$. Then $\Pos$ and $\Pos_e$ have all colimits of shape $I$, i.e. colimits exist for every functor $F:I\to \Pos$ and $F:I\to \Pos_e$. Moreover, if $F:I\to \Pos$ and $(L,\{f_i:i\in I\})$ is a colimit for $F$, then:
\begin{enumerate}
\item If $F$ can be considered as a functor from $I$ to $\Pos_e$, i.e. if $F(g_{ij})$ is an order-embedding for all $i\leq j\in I$, then a colimit of $F:I\to\Pos_e$ is also a colimit for $F:I\to\Pos$.
\item For all $p\in L$ there is $i\in I$, and $x\in Fi$, with $p = f_i(x)$. 
\item  Let $p,q\in L$, let $i,j\in I$, and suppose $p = f_i(x)$ and $q = f_j(y)$ for some $x\in Fi$ and $y\in Fj$. Then, of the following statements the implications $(a)\implies(b)\implies(c)$ hold. Moreover, if $F$ can be considered as a functor from $I$ to $\Pos_e$ then the statements are all equivalent.
\begin{enumerate}
\item For all $k\geq \{i,j\}$ in $I$ we have $Fg_{ik}(x)\leq Fg_{jk}(y)$ in $Fk$.
\item There is $k\geq \{i,j\}$ in $I$ with $Fg_{ik}(x)\leq Fg_{jk}(y)$ in $Fk$. 
\item $p\leq q$. 
\end{enumerate}
\item $L$ is a lattice if, for all $i,j\in I$, and for all $x\in Fi$ and $y\in Fj$, the following conditions both hold:
\begin{enumerate}
\item There is $k\geq \{i,j\}$ in $I$ and $z\in Fk$ with $g_{in}(x)\vee g_{jn}(y)=f_{kn}(z)$ in $Fn$ for all $n\geq k$.
\item There is $k\geq \{i,j\}$ in $I$ and $z\in Fk$ with $g_{in}(x)\wedge g_{jn}(y)=f_{kn}(z)$ in $Fn$ for all $n\geq k$.
\end{enumerate}
If $F$ can be considered as a functor from $I$ to $\Pos_e$ then the converse (only if) is also true.
\end{enumerate} 
\end{prop}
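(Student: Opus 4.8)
The plan is to prove all four parts at once by exhibiting a single concrete colimit and then reading off the properties. Since colimits are unique up to a canonical isomorphism compatible with the structure maps, and since each of the four claims is phrased purely in terms of the maps $f_i$ and the order on the colimit object, it suffices to verify everything for one explicit construction. First I would build the underlying set of the colimit as the quotient of $\bigsqcup_{i\in I}F(i)$ that identifies $x\in F(i)$ with $Fg_{ik}(x)$; concretely, $x\in F(i)$ and $y\in F(j)$ are identified iff there is $k\geq\{i,j\}$ with $Fg_{ik}(x)=Fg_{jk}(y)$, and directedness makes this relation transitive. Writing $[x]$ for the class of $x$, define $[x]\preceq[y]$ iff there is $k\geq\{i,j\}$ with $Fg_{ik}(x)\le Fg_{jk}(y)$ in $F(k)$. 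A short check using directedness shows $\preceq$ is well defined on classes and is a partial order (antisymmetry: two witnesses can be amalgamated over a common upper bound, forcing equality of the relevant images and hence of the classes). Let $L$ be the resulting poset and let $f_i\colon F(i)\to L$ send $x\mapsto[x]$; these are order-preserving and satisfy $f_i=f_j\circ Fg_{ij}$ for $i\le j$.

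For the universal property, given a cocone $(C,\{h_i\})$ I would set $u\colon L\to C$, $u([x])=h_i(x)$; the cocone equations make this well defined, the definition of $\preceq$ makes it order-preserving, and uniqueness is forced since every element of $L$ is some $f_i(x)$. This proves $\Pos$ has colimits of shape $I$, and immediately yields part (2). For part (3) note $(a)\Rightarrow(b)$ is trivial from directedness and $(b)\Rightarrow(c)$ is just applying the order-preserving map $f_k$; moreover for this concrete $L$ one also has $(c)\Rightarrow(b)$ straight from the definition of $\preceq$, a fact I will reuse in part (4). For part (1) and the $\Pos_e$-half of part (3), observe that if every $Fg_{ij}$ is an order-embedding then the definition of $\preceq$ forces each $f_i$ to be an order-embedding, so $(L,\{f_i\})$ is a cocone in $\Pos_e$; and given any cocone in $\Pos_e$, the induced $u$ is again an order-embedding (pull both arguments down to a common stage $k$ by directedness and use that $h_k$ reflects the order). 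Hence $(L,\{f_i\})$ is simultaneously a colimit in $\Pos$ and in $\Pos_e$, so $\Pos_e$ has colimits of shape $I$, and any $\Pos_e$-colimit, being canonically $\Pos_e$-isomorphic to it, is also a $\Pos$-colimit. The same "pull down to a common stage and use that the transition map reflects order'' trick upgrades $(b)\Rightarrow(a)$ in part (3) under the embedding hypothesis.

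For part (4), in the forward direction, given $p=f_i(x)$ and $q=f_j(y)$, condition (a) of (4) supplies $k\geq\{i,j\}$ and $z\in F(k)$ whose images are the eventual joins of the images of $x$ and $y$; applying the $f_n$ for $n\geq k$ shows $f_k(z)$ is an upper bound of $p,q$, while any upper bound $r=f_m(w)$ can, via $(c)\Rightarrow(b)$ and directedness, be transported to a stage $n\geq\{k,m\}$ above which $Fg_{mn}(w)$ dominates both $Fg_{in}(x)$ and $Fg_{jn}(y)$, hence dominates their join $Fg_{kn}(z)$, so $f_k(z)\le r$; thus $p\vee q=f_k(z)$, and dually $p\wedge q$ exists by condition (b), so $L$ is a lattice. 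For the converse, assume the $Fg_{ij}$ are embeddings and $L$ is a lattice. Given $x\in F(i)$ and $y\in F(j)$, use part (2) to write $p\vee q=f_k(z)$ and enlarge $k$ so that $k\geq\{i,j\}$. Then for each $n\geq k$ I would check $Fg_{kn}(z)=Fg_{in}(x)\vee Fg_{jn}(y)$ in $F(n)$: it is an upper bound because $p\le p\vee q$, pushed through $f_n$ and reflected along the embedding $f_n$, gives $Fg_{in}(x)\le Fg_{kn}(z)$ (similarly for $y$); and it is least because any upper bound $w$ in $F(n)$ has $f_n(w)\ge p,q$, hence $f_n(w)\ge p\vee q=f_n(Fg_{kn}(z))$, hence $w\ge Fg_{kn}(z)$ by reflecting order along $f_n$, which is an embedding here by part (1). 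This gives condition (a) of (4), and dually condition (b).

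I expect the only real obstacle to be bookkeeping: keeping the two categories $\Pos$ and $\Pos_e$ straight, and in part (4) carefully marshalling part (2), the embedding case of part (3), the lattice structure of $L$, and a directedness step to arrange $k\geq\{i,j\}$ with a compatible witness. There should be no conceptual difficulty beyond the standard construction of a directed colimit of sets equipped with the correct order and the routine verifications that the order is well defined.
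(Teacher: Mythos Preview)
Your proposal is correct and takes essentially the same approach as the paper: build the directed colimit explicitly as the quotient of the disjoint union by the ``eventually equal'' relation, equip it with the ``eventually below'' order, and read off all four claims from this concrete description. The paper's own proof is little more than a pointer to this construction (its Definition~\ref{D:dl} and Lemma~\ref{L:dl}) together with a citation of Hodges, declaring the verifications ``straightforward''; you have actually carried them out. One small remark: when in the converse of part~(4) you invoke ``$f_n$ is an embedding by part~(1)'', what you are really using is the fact you established earlier for the concrete colimit, namely that each $f_i$ is an order-embedding once all the transition maps are; this is fine since you reduced to the concrete model at the outset, but the appeal to part~(1) is slightly indirect.
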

\begin{proof}
This follows from general model theoretic considerations (see e.g. \cite[Theorems 2.4.5 and 2.4.6]{Hodg93}). Direct proof by construction is also straightforward. $L$ is constructed by first taking the union $\bigcup_I Fi$, and then taking the quotient of this with respect to the quasiordering given by $x\preceq y$ if and only if there is $i$ such that $x,y\in Fi$ and $x\leq y$. The $f_i$ maps are induced by the inclusions into $\bigcup_I Fi$.
\end{proof}

\section{Building free lattices}\label{S:build}
Let $P$ be a poset, and, recalling Definition \ref{D:spec}, let $\cU$ and $\cD$ be join- and meet-specifications of $P$ respectively, both with radius $\omega$. We make definitions as follows:

\begin{itemize}
\item Define $\cA_0= P$.
\item Define $\cU_0$ and $\cD_0$ by $\cU_0=\cU$ and $\cD_0= \cD$.
\item Define $\cI_0$ and $\cF_0$ to be, respectively, the sets of all non-empty finitely generated  $\cU_0$-ideals and $\cD_0$-filters of $P$ (recall Definition \ref{D:Uideal}). Treat these as posets by ordering by inclusion and reverse inclusion respectively.
\item Define $i_0:P\to \cI_0$ and $f_0:P\to \cF_0$ by $i_0:p\mapsto p^\downarrow$, and $f_0:p\mapsto p^\uparrow$. 
\item Define $\cA_1$ to be the amalgam of $\cF_0$ and $\cI_0$ as in Definition \ref{D:amal}.
\item Define $\pi_0:\cI_0\to \cA_1$ and $\tau_0:\cF_0\to\cA_1$ to be the maps induced by the inclusion functions.
\item Define $\gamma_0:P\to \cA_1$ by $\gamma_0 = \pi_0\circ i_0 = \tau_0\circ f_0$.
\end{itemize}
For $n\geq 1$ we make definitions as follows:
\begin{itemize} 
\item Define $\cU_n= \cD_n$ to be the set of non-empty finite subsets of $\gamma_{n-1}[\cA_{n-1}]$.
\item Define $\cI_n$ and $\cF_n$ to be, respectively, the sets of all non-empty finitely generated  $\cU_n$-ideals and $\cD_n$-filters of $\cA_{n}$.
\item Define $i_n:\cA_n\to \cI_n$ and $f_n:\cA_n\to \cF_n$ by $i_n:x\mapsto x^\downarrow$ and $f_n:x\mapsto x^\uparrow$.
\item Define $\cA_n$ to be the amalgam of $\cF_{n-1}$ and $\cI_{n-1}$.
\item Define $\pi_{n-1}:\cI_{n-1}\mapsto \cA_{n}$ and $\tau_{n-1}:\cF_{n-1}\to\cA_{n}$ to be the maps induced by the inclusion functions.
\item Define $\gamma_{n-1}:\cA_{n-1}\to \cA_{n}$ by $\gamma_{n-1}= \pi_{n-1}\circ i_{n-1} = \tau_{n-1}\circ f_{n-1}$.
\item For each $m< n$ define $\gamma_{mn} = \gamma_{n-1}\circ\ldots\circ\gamma_m$ (in particular, $\gamma_{m(m+1)} = \gamma_m$).
\item Define $\gamma_{nn}$ to be the identity map on $\cA_n$.
\end{itemize}

\begin{figure}
\[\xymatrix{ P\ar[r]^{i_0}\ar[d]_{f_0}\ar[dr]^{\gamma_0}  & \cI_0\ar[d]^{\pi_0}\\
\cF_0\ar[r]_{\tau_0} &  \cA_1\ar[r]^{i_1}\ar[d]_{f_1}\ar[dr]^{\gamma_1} & \cI_1\ar[d]^{\pi_1}\\
&  \cF_1\ar[r]_{\tau_1} & \cA_2\ar[r]^{i_2}\ar[d]_{f_2}\ar[dr]^{\gamma_2} & \cI_2\ar[d]^{\pi_2}\\
& &  \cF_2\ar[r]_{\tau_2} & \cA_3\ar@{..>}[dr] \\
& & & & \cA
}\] 
\caption{Building a free lattice}
\label{F:iter}
\end{figure}

The situation is presented as Figure \ref{F:iter}. $\cA$ is the object part of the colimit of the chain $\cA_0,\cA_1,\ldots$ as made precise in Theorem \ref{T:isColimit} later. For this we will need some technical results. The next lemma simply phrases the construction given at the start of this section in terms of a \emph{diagram}, in the categorical sense, and says that the maps in the resulting colimit are $(\cU_n,\cD_n)$-embeddings for all $n\in\omega$.

\begin{lemma}\label{L:muMorph}
Consider the ordinal $\omega$ as a category whose maps are induced by the order relation, and for each $m\leq n$ denote the map from $m$ to $n$ by $g_{mn}$. With $\cA_n$ etc. as defined at the start of this section, define a functor $F:\omega\to \Pos_e$ by $F(n) = \cA_n$ and $F(g_{nn})= id_{\cA_n}$, for all $n\in\omega$, and, for $m< n$, $F(g_{mn}) = \gamma_{m(n-1)}$. Let $(\mathcal A, \{\mu_n:n\in\omega\})$ be a colimit for $F$. Then $\mu_n$ is a $(\cU_n,\cD_n)$-embedding for all $n\in \omega$.
\end{lemma}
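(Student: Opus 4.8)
The plan is to show by induction on $n$ that $\mu_n:\cA_n\to\cA$ is a $(\cU_n,\cD_n)$-embedding. Since $F$ is a functor into $\Pos_e$, by Proposition \ref{P:coLim}(1) the colimit is also a colimit in $\Pos$, and by Proposition \ref{P:coLim}(3) (equivalence of the three statements in the $\Pos_e$ case) the map $\mu_n$ is an order-embedding: $x\leq y$ in $\cA_n$ iff $\gamma_{n(n+k)}(x)\leq\gamma_{n(n+k)}(y)$ for all (equivalently, some) $k$, which is exactly the condition for $\mu_n(x)\leq\mu_n(y)$. So the real content is that $\mu_n$ is a $\cU_n$-morphism and a $\cD_n$-morphism. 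By duality (each $\cA_{m+1}$ is a canonical amalgamation, and Lemma \ref{L:amalgDual} makes the whole construction self-dual under $\partial$), it suffices to treat the $\cU_n$-morphism case.

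First I would unwind the definitions. A set $S\in\cU_n$ is a non-empty finite subset of $\gamma_{n-1}[\cA_{n-1}]$ (for $n\geq 1$; for $n=0$ it is a member of the original specification $\cU$). The point of building $\cA_{n+1}$ as the canonical amalgamation $\cA_{(e_{X_n},e_{Y_n})}$ with $e_{Y_n}:\cA_n\to Y_n$ the join-extension corresponding to the finitely generated $\cU_n$-ideals is precisely that $e_{Y_n}$ preserves the joins in $\cU_n$: by Lemma \ref{L:cont}(1) (dualized), $e_{Y_n}$ is a $\cU_n$-morphism, so $e_{Y_n}(\bv S)=\bv e_{Y_n}[S]$ whenever $\bv S$ exists in $\cA_n$ and $S\in\cU_n$. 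Composing with the completely join-preserving embedding $\pi_{Y_n}$ and using Lemma \ref{L:gamma}(3), we get that $\gamma_n:\cA_n\to\cA_{n+1}$ is a $\cU_n$-morphism; more precisely, $\gamma_n(\bv S)=\bv\gamma_n[S]$ for $S\in\cU_n$ with $\bv S$ defined in $\cA_n$. The key subtlety is that when $S\in\cU_n$, its join $\bv S$ is in fact \emph{defined} in $\cA_{n+1}$ (it is the image of the principal $\cU_n$-ideal generated by $S$ under $\pi_{Y_n}$, composed appropriately), even if it was not defined in $\cA_n$; I would record this as the central step, using Corollary \ref{C:bigLift} to locate the new element as $\pi_{Y_n}$ of the relevant ideal and checking it is the least upper bound of $\gamma_n[S]$.

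Then the induction proceeds as follows. Suppose $S\in\cU_n$ and $\bv S$ exists in $\cA$. By Proposition \ref{P:coLim}(2) every element of $\cA$ comes from some stage, and since $S$ is finite and the chain is directed, we may pick $N\geq n$ (indeed $N = n+1$ works for the join to become defined, but we may need to go further) and an element $w\in\cA_N$ with $\mu_N(w)=\bv S$ in $\cA$; I would argue that $\gamma_{nN}(\bv S_{\cA_{n+1}})$, the image of the join formed at stage $n+1$, is already an upper bound for $\mu_n[S] = \mu_N\circ\gamma_{nN}[S]$ and, using that the $\mu_N$ are embeddings together with Proposition \ref{P:coLim}(3), that it must equal $\bv\mu_n[S]$ because any other upper bound in $\cA$ descends (again by directedness and Proposition \ref{P:coLim}(3)) to an upper bound of $\gamma_{n(n+1)}[S]$ in some $\cA_M$, which dominates $\gamma_{n(n+1)}(\bv S)$ there since that join was constructed to be least. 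Hence $\mu_n(\bv S) = \mu_n(\bv_{\cA_{?}} S)$ is forced, giving the $\cU_n$-morphism property. The $n=0$ base case is the same argument with $\cU_0=\cU$ and $e_{Y_0}=e_{\cI_0}$ in place of $e_{Y_n}$.

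The main obstacle I anticipate is the bookkeeping around \emph{when} a join becomes "stably" defined along the chain, and making sure that the least upper bound computed at stage $n+1$ is not later "overtaken" — i.e. that a genuinely smaller upper bound cannot appear at a higher stage. This is exactly the phenomenon the authors warn about after Proposition \ref{P:coLim} (the converse to (4) failing when maps are not embeddings), but here all the $\gamma$'s \emph{are} order-embeddings, so Proposition \ref{P:coLim}(3) gives the needed rigidity: an inequality that holds at one stage persists, and one that fails at one stage fails at all later stages. Carefully invoking the $\Pos_e$ equivalence in Proposition \ref{P:coLim}(3) at the right places is what makes the argument go through, and I would be most careful to check that $S\in\cU_n$ really does have its join defined already at $\cA_{n+1}$ (rather than only "eventually"), since that is what pins down the value of $\mu_n(\bv S)$.
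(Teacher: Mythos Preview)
Your approach is essentially the paper's: show via Lemma~\ref{L:gamma}(3) (and the dual of Lemma~\ref{L:cont}(1)) that each $\gamma_m$ preserves the joins in $\cU_m$, so that $\gamma_{kn}(\bv T)=\bv\gamma_{kn}[T]$ for all $n\geq k$, and then pass to the colimit using the $\Pos_e$ rigidity in Proposition~\ref{P:coLim}(3); the paper compresses this into three lines.

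One small misconception worth correcting: for $S\in\cU_n$ the join $\bv S$ is already defined in $\cA_n$, not merely in $\cA_{n+1}$. This is part of what it means for $\cU_n$ to be a join-specification (Definition~\ref{D:spec}(1)), and it holds because $e_{Y_{n-1}}$ is $\omega$-join-complete and $\pi_{Y_{n-1}}$ is completely join-preserving, so $\bv S=\pi_{Y_{n-1}}(\bv e_{Y_{n-1}}[S'])\in\cA_n$ where $S=\gamma_{n-1}[S']$. Hence the step you flag as ``central'' (existence of the join at stage $n+1$) is not actually in play; the content is \emph{preservation} of that join along the chain, which you do establish.
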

\begin{proof}
That $\mu_n$ is an order-embedding for all $n\in\omega$ follows from the fact that $\gamma_k$ is an order-embedding for all $k\in\omega$ (by Lemma \ref{L:gamma}). Now, given $k\in\omega$ and $T\in \cU_k$, it follows from Lemma \ref{L:gamma2} that $\gamma_{kn}(\bv T) = \bv \gamma_{kn}[T]$ for all $n\geq k$. Thus $\mu_k(\bv T) = \bv \mu_k[T]$, by Proposition \ref{P:coLim}(3).   In combination with the dual argument this gives us the result. 
\end{proof}

The next lemma describes $(\cU_n,\cD_n)$-morphisms from the $\cA_n$ posets into lattices in terms of meet- and join-preservation properties on the images of the $\pi_{n-1}$ and $\tau_{n-1}$ maps, for $n>0$. This will be used to show that the map induced by the universal property of colimits is a lattice homomorphism, and is thus the right kind of map for the universal property of free lattices.

\begin{lemma}\label{L:morphEquiv}
Let $n\in\omega\setminus\{0\}$, let $L$ be a lattice, and let $h:\cA_n\to L$. Then $h$ is a $(\cU_n,\cD_n)$-morphism if and only if $h$ is $\omega$-meet-preserving on $\tau_{n-1}[\cF_{n-1}]$, and $\omega$-join-preserving on $\pi_{n-1}[\cI_{n-1}]$.
\end{lemma}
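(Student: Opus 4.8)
The plan is to prove the two directions of the biconditional, using the concrete description $\cA_n\cong X_{n-1}\uplus Y_{n-1}$ and the facts that $\pi_{X_{n-1}}$ is completely meet-preserving, $\pi_{Y_{n-1}}$ is completely join-preserving (Definition \ref{D:amalg}(2)), and that $\cA_n$ is join-generated by $\pi_{X_{n-1}}[X_{n-1}]$ and meet-generated by $\pi_{Y_{n-1}}[Y_{n-1}]$ (Corollary \ref{C:bigLift}). The key bridge between the two descriptions of the morphism is the identity $\cU_n=\cD_n=\{$non-empty finite subsets of $\gamma_{n-1}[\cA_{n-1}]\}$, so a subset $T$ belongs to $\cU_n$ (resp. $\cD_n$) precisely when it is a finite set of elements of the form $\gamma_{n-1}(z)$, and $\gamma_{n-1}=\pi_{X_{n-1}}\circ e_{X_{n-1}}=\pi_{Y_{n-1}}\circ e_{Y_{n-1}}$.

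For the forward direction, assume $f$ is a $(\cU_n,\cD_n)$-morphism. To show $f$ is $\omega$-meet-preserving on $\pi_{X_{n-1}}[X_{n-1}]$, take a finite $Z\subseteq\pi_{X_{n-1}}[X_{n-1}]$ whose meet exists in $\cA_n$; write $Z=\{\pi_{X_{n-1}}(x_1),\dots,\pi_{X_{n-1}}(x_k)\}$. Since $e_{X_{n-1}}$ is a meet-extension, each $x_i=\bw e_{X_{n-1}}[e_{X_{n-1}}^{-1}(x_i^\uparrow)]$, and since $\pi_{X_{n-1}}$ is completely meet-preserving, $\pi_{X_{n-1}}(x_i)=\bw\gamma_{n-1}[e_{X_{n-1}}^{-1}(x_i^\uparrow)]$; hence $\bw Z=\bw\gamma_{n-1}[\bigcup_i e_{X_{n-1}}^{-1}(x_i^\uparrow)]$, a meet of elements of $\gamma_{n-1}[\cA_{n-1}]$. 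The obstacle here is that this is an \emph{infinite} meet, so one cannot directly invoke the $\cD_n$-morphism property (whose radius is $\omega$). The fix: it suffices to exhibit a \emph{finite} $T\subseteq\gamma_{n-1}[\cA_{n-1}]$ with $\bw T=\bw Z$ in $\cA_n$ and $\bw f[T]=\bw f[Z]$ in $L$; then $f(\bw Z)=f(\bw T)=\bw f[T]=\bw f[Z]$ using $T\in\cD_n$. One obtains such a $T$ by noting $\cU_{n-1}=\cD_{n-1}$ consists of finite sets, so each filter/ideal at level $n-1$ is finitely generated, whence each $e_{X_{n-1}}^{-1}(x_i^\uparrow)$ is the upset of a finite set and $x_i$ is a finite meet $\bw e_{X_{n-1}}[S_i]$ with $S_i$ finite; taking $T=\bigcup_i\gamma_{n-1}[S_i]$ works, using that $\gamma_{n-1}=\pi_{X_{n-1}}\circ e_{X_{n-1}}$ is completely meet-preserving so $\bw T=\bw Z$, and that $f[T]$ and $f[Z]$ have the same lower bounds in $L$ because $f$ is order-preserving and $Z\subseteq T^{\uparrow}$-meets. (Care is needed to check $\bw f[T]=\bw f[Z]$; this follows since each element of $Z$ is $\bw$ of $f$-images of a subset of $T$, once $f(\bw e_{X_{n-1}}[S_i])=\bw f[\gamma_{n-1}[S_i]]$, which is exactly the $\cD_n$-morphism hypothesis applied to the finite set $\gamma_{n-1}[S_i]\in\cD_n$.) The join case is dual, using $\pi_{Y_{n-1}}$ and $\cU_n$.

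For the converse, assume $f$ is $\omega$-meet-preserving on $\pi_{X_{n-1}}[X_{n-1}]$ and $\omega$-join-preserving on $\pi_{Y_{n-1}}[Y_{n-1}]$. Let $T=\{\gamma_{n-1}(z_1),\dots,\gamma_{n-1}(z_k)\}\in\cU_n$ with $\bv T$ existing in $\cA_n$. Each $\gamma_{n-1}(z_j)=\pi_{Y_{n-1}}(e_{Y_{n-1}}(z_j))\in\pi_{Y_{n-1}}[Y_{n-1}]$, so $T\subseteq\pi_{Y_{n-1}}[Y_{n-1}]$, and the $\omega$-join-preservation hypothesis gives $f(\bv T)=\bv f[T]$ directly; thus $f$ is a $\cU_n$-morphism. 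Dually, every $T\in\cD_n$ lies in $\pi_{X_{n-1}}[X_{n-1}]$ (since $\gamma_{n-1}=\pi_{X_{n-1}}\circ e_{X_{n-1}}$), so $\omega$-meet-preservation on $\pi_{X_{n-1}}[X_{n-1}]$ makes $f$ a $\cD_n$-morphism. Hence $f$ is a $(\cU_n,\cD_n)$-morphism.

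The main obstacle is the forward direction: translating the "completely meet-preserving" behaviour of $\pi_{X_{n-1}}$ (which naturally produces infinite meets of elements of $\gamma_{n-1}[\cA_{n-1}]$) into a statement about \emph{finite} meets that the $\cD_n$-morphism hypothesis can handle. The resolution rests on the finiteness built into the construction — $\cU_{n-1}=\cD_{n-1}$ consist of finite sets, so all the relevant filters and ideals at level $n-1$ are finitely generated — which lets us replace the infinite index set by a finite one without changing the relevant meets in $\cA_n$ or in $L$.
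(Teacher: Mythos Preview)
Your proposal is correct and follows the same approach as the paper: in both directions the key is that $e_{X_{n-1}}$ and $e_{Y_{n-1}}$ are $\omega$-supported (Lemma~\ref{L:cont}(2)), so every element of $\pi_{X_{n-1}}[X_{n-1}]$ (resp.\ $\pi_{Y_{n-1}}[Y_{n-1}]$) is a \emph{finite} meet (resp.\ join) of elements of $\gamma_{n-1}[\cA_{n-1}]$, allowing the $(\cU_n,\cD_n)$-morphism hypothesis to apply; the paper simply goes straight to this rather than first noting the ``infinite meet'' obstacle and then resolving it. In your converse you should verify, as the paper does, that $\bv T$ actually lies in $\pi_{Y_{n-1}}[Y_{n-1}]$ --- this holds because $\bv e_{Y_{n-1}}[\{z_1,\dots,z_k\}]$ exists in $Y_{n-1}$ by definition of $Y_{n-1}$ and $\pi_{Y_{n-1}}$ is completely join-preserving --- so that the $\omega$-join-preservation hypothesis on $\pi_{Y_{n-1}}[Y_{n-1}]$ genuinely applies.
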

\begin{proof}
Suppose $h$ is a $(\cU_n,\cD_n)$-morphism. Then, by definition of $\cU_n$, we have $h(\bv S) = \bv h[S]$ for all finite $S\subseteq \gamma_{n-1} [\cA_{n-1}]$. Let $Z\subseteq \cI_{n-1}$ with $|Z|<\omega$, and suppose $\bv Z$ is defined in $\cI_{n-1}$. By definition of $\cI_{n-1}$, each $y\in \cI_{n-1}$ is a finitely generated $\cU_{n-1}$-ideal. So, for each $y\in Z$ there is a finite $T_y\subseteq \cA_{n-1}$ with $y = \bv i_{n-1}[T_y]$ (in other words, $y$ is the smallest $\cU_{n-1}$-ideal containing $T_y$). Moreover, $\bigcup_{y\in Z} T_y$ is also finite, and $\bv i_{n-1}[\bigcup_{y\in Z} T_y]$ exists and is $\bv Z$. Now, $\pi_{n-1}$ is completely join-preserving, by Lemma \ref{L:gamma}, so 
\begin{align*}
h(\bv \pi_{n-1}[Z]) &= h\circ \pi_{n-1}(\bv Z) \\
&= h\circ \pi_{n-1}(\bv i_{n-1}[\bigcup_{Z} T_y]) \\
&= h(\bv \pi_{n-1}\circ i_{n-1}[\bigcup_{Z} T_y]) \\
&= h(\bv \gamma_{n-1}[\bigcup_{Z} T_y]) \\
&= \bv h\circ \gamma_{n-1}[\bigcup_{Z} T_y] \\
&= \bv h\circ \pi_{n-1}\circ i_{n-1}[\bigcup_{Z} T_y]\\
&= \bv h\circ \pi_{n-1}[Z].
\end{align*} 
Given finite $Z\subseteq \cF_{n-1}$ we also have $h(\bw \tau_{n-1}[Z]) = \bw h\circ \tau_{n-1}[Z]$ by a dual argument.

Conversely, suppose $h$ is $\omega$-join-preserving on $\pi_{n-1}[Y_{n-1}]$. Let $Z\subseteq \gamma_{n-1}[\cA_{n-1}]$ be finite. Then $Z = \pi_{n-1}\circ i_{n-1}[S]$ for some finite $S\subseteq \cA_{n-1}$, by definition of $\gamma_{n-1}$. Moreover, $\bv i_{n-1}[S]$ exists in $\cI_{n-1}$ (it's the smallest $\cU_{n-1}$-ideal containing $S$), and 
\[\bv Z = \bv \pi_{n-1}\circ i_{n-1}[S] = \pi_{n-1}(\bv i_{n-1}[S]),\]
 as $\pi_{n-1}$ is completely join-preserving. Thus $Z\subseteq \pi_{n-1}[\cI_{n-1}]$ and also $\bv Z \in \pi_{n-1}[\cI_{n-1}]$, and so, by the assumption that $h$ is $\omega$-join-preserving on $\pi_{n-1}[\cI_{n-1}]$, we have $h(\bv Z) = \bv h[Z]$. Thus $h$ is a $\cU_n$-morphism. That $h$ is a $\cD_n$-morphism whenever it is $\omega$-meet-preserving follows from a dual argument.  
\end{proof}

The next result says that $(\cU_0,\cD_0)$-morphisms from $P$ into lattices induce sequences of maps corresponding to a cocone. Thus the universal property of colimits produces a map that we shall show gives us what we want for the universal property of free lattices.
\begin{prop}\label{P:univA}
Let $L$ be a lattice, and let $h:P\to L$ be a $(\cU_0,\cD_0)$-morphism. Then there exists a sequence of maps $(h_0,h_1,\ldots)$, where $h_k:\cA_k\to L$ for each $k$, such that:
\begin{enumerate}
\item $h_k$ is a $(\cU_k,\cD_k)$-morphism for each $k$.
\item The appropriate part of the diagram in Figure \ref{F:buildF} commutes (ignoring the maps $\mu_0$ and $h^*$ for now).
\end{enumerate}
Moreover, the sequence $(h_0,h_1,\ldots)$ is unique with these properties.
\end{prop}
\begin{proof}
To show existence of such a sequence we prove by induction on $n$ that suitable subsequences $(h_0,\ldots,h_n)$ exist for all $n$. The base case $n=0$ is trivial, as we just set $h_0=h$, so given that $h=h_0,h_1,\ldots,h_{n}$ have been defined, we define $h_{n+1}:\cA_{n+1}\to L$ by
\[h_{n+1}(z) = \begin{cases} \bv h_{n}[I] \text{ when $z=I\in \cI_{n}$}\\  \bw h_n[F] \text{ when $z=F\in \cF_{n}$}\end{cases}\]
We must first check that $h_{n+1}$ is well defined. Note that $\cI_{n}$ and $\cF_{n}$ contain finitely generated $\cU_{n}$-ideals and $\cD_{n}$-filters, respectively. So, given $I\in\cI$, by definition there is a finite $S\subseteq \cA_{n}$ such that $I$ is the smallest $\cU_{n}$-ideal containing $S$. By Lemma \ref{L:S} we have $\bv h_n[I] = \bv h_n[S]$, with the latter join existing in $L$ as $S$ is finite. By this and a dual argument we see that the required joins and meets exist for the definition of $h_{n+1}$.  

In addition, we must check that $h_{n+1}(z)$ is well defined in the case where $I=z=F$ for some $I\in \cI_n$ and $F\in \cF_n$. For this, note first that, by definition, $I\leq F$ if and only if for all $p\in I$ and for all $q\in F$ we have $p\leq q$. From this it follows immediately that $\bv h_n[I]\leq \bw h_n[F]$. Similarly, $F\leq I$ if and only if there is $p\in F\cap I$, in which case $\bw h_n[F]\leq \bv h_n[I]$. Thus $h_{n+1}$ is well defined, and is also order preserving. 

Now, given $p\in \cA_n$, we have $h_{n+1}(p) = \bv h_n [p^\downarrow] = h_n(p)$, so the triangle involving $h_n$, $\gamma_n$ and $h_{n+1}$ commutes (recall that $\gamma_n(p) = p^\downarrow$).

Moreover, by Lemma \ref{L:morphEquiv}, to show that $h_{n+1}$ is a $(\cU_{n+1}, \cD_{n+1})$-morphism it is sufficient to show it is $\omega$-meet-preserving on $\tau_{n}[\cF_{n}]$, and $\omega$-join-preserving on $\pi_{n}[\cI_{n}]$. So let $Z\subseteq \pi_{n}[\cI_{n}]$, and suppose $\bv Z = I'$ in $\pi_{n}[\cI_{n}]$, so $I'$ is the smallest $\cU_n$-ideal containing $\bigcup Z$. As $\pi_n$ is completely join-preserving, by Lemma \ref{L:gamma}, we must have $\bv Z = I'$ in $\cA_{n+1}$ too. Now, for all $I\in Z$ we clearly have $h_{n+1}(I)\leq h_{n+1}(I')$. So, suppose $x\in L$ and that $h_{n+1}(I)\leq x$ for all $I\in Z$. Then, by Lemma \ref{L:ideals} and the inductive assumption that $h_n$ is a $(\cU_n,\cD_n)$-morphism, $h_n^{-1}(x^\downarrow)$ is a $\cU_n$-ideal, and also $I\subseteq h_n^{-1}(x^\downarrow)$ for all $I\in Z$, by definition of $h_{n+1}$. So $I'\subseteq h_n^{-1}(x^\downarrow)$, and thus $h_{n+1}(I')\leq x$. So $h_{n+1}$ is actually completely join-preserving on $\pi_{n}[\cI_{n}]$, and by a dual argument it is also completely meet-preserving on $\tau_{n}[\cF_{n}]$. Thus $h_{n+1}$ is a $(\cU_{n+1}, \cD_{n+1})$-morphism as claimed.

Finally, $h_{n+1}$ is unique with these properties, because given $I\in \cI_n$ generated by finite $S\subseteq \cA_n$, we have $I = \bv i_n[S]$ in $\cI_n$, and so $I = \bv \pi_n\circ i_n[S]=\bv \gamma_n[S]$ in $\cA_{n+1}$, as $\pi_n$ is completely join-preserving. If $h_{n+1}$ is to be a $(\cU_{n+1},\cD_{n+1})$-morphism then we must have $h_{n+1}(I) = \bv h_{n+1}\circ \gamma_n[S]$, since $\gamma_n[S]$ is a finite subset of $\gamma_n[\cA_n]$, and if the diagram is to commute we must have $h_{n+1}\circ \gamma_n[S] = h_n[S]$. So, appealing to Lemma \ref{L:S}, $h_{n+1}(I) = \bv h_n[S] = \bv h_n[I]$. By a dual argument we obtain that $h_{n+1}(F)$ can only be $\bw h_n[F]$, for $F\in\cF_n$, which completes the proof.

\end{proof}

\begin{figure}
\[\xymatrix{ P\ar@/^2pc/[rrrr]^{\mu_0}\ar[r]^{\gamma_0}\ar[dd]_{h} & \cA_1 \ar[ddl]_{h_1}\ar[r]^{\gamma_1} & \cA_2\ar@/^.3pc/[ddll]_{h_2}\ar[r]^{\gamma_2} & \cA_3\ar@/^.8pc/[ddlll]_{h_3}\ar@{.>}[r] & \cA\ar@/^1.5pc/[ddllll]^{h^*}\\
\\
L
}\] 
\caption{A sequence of $(\cU_n,\cD_n)$-morphisms}
\label{F:buildF}
\end{figure}

\begin{thm}\label{T:isColimit}
Consider the ordinal $\omega$ as a category, and for each $m\leq n\in\omega$ denote the map from $m$ to $n$ by $g_{mn}$. Define a functor $F:\omega\to \Pos_e$ so that $Fn = \cA_n$ for all $n\in\omega$. Define $F(g_{mm})$ to be the identity map for all $m\in\omega$, and define $F(g_{mn}) = \gamma_{m(n-1)}$ for all $m<n\in\omega$.  Let $(\mathcal A, \{\mu_n:n\in\omega\})$ be a colimit for $F$. Then $\mu_0:P\to \cA\cong \mu:P\to \FUD$. In other words, there is an isomorphism $\phi:\cA\to \FUD$ such that $\phi\circ \mu_0 = \mu$.  
\end{thm}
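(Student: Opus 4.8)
The plan is to verify directly that $(\cA,\mu_0)$ satisfies the universal property defining $\FUD$ in Definition \ref{D:free}, and then to conclude via the uniqueness of $\FUD$ up to isomorphism fixing $P$ (as recalled just after that definition). Three things must be established: that $\cA$ is a lattice, that $\mu_0\colon P\to\cA$ is a $(\cU,\cD)$-embedding, and that every $(\cU,\cD)$-morphism $f\colon P\to L$ into a lattice factors uniquely through $\mu_0$ by a lattice homomorphism.

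The easy point is that $\mu_0$ is a $(\cU_0,\cD_0)$-embedding, i.e.\ a $(\cU,\cD)$-embedding, by Lemma \ref{L:muMorph}. For the lattice claim, I would take $x\in\cA_i$ and $y\in\cA_j$ with $i\le j$ and track their images along the chain. In $\cA_{j+1}$ these images both lie in $\gamma_j[\cA_j]$, hence form a member of $\cU_{j+1}=\cD_{j+1}$; since $e_{Y_{j+1}}$ is a $\cU_{j+1}$-morphism (the dual of Lemma \ref{L:cont}) and $\pi_{Y_{j+1}}$ is completely join-preserving, the finitely generated $\cU_{j+1}$-ideal generated by this pair witnesses that their join $z$ exists in $\cA_{j+2}$, and dually their meet exists there. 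Moreover $z$ is \emph{stably} defined from stage $j+2$ onwards: at each later stage $m$ the images of $x$ and $y$ again form a finite subset of $\gamma_{m-1}[\cA_{m-1}]$, so the transition map $\gamma_m$, being simultaneously a $\cU_m$- and a $\cD_m$-morphism by Lemma \ref{L:gamma} together with Lemma \ref{L:cont}(1) and its dual, preserves both their join and their meet. Since every $\gamma$ map is an order-embedding, $F$ factors through $\Pos_e$ and Proposition \ref{P:coLim}(1) applies; the stabilization just described is precisely condition (4) of Proposition \ref{P:coLim}, so $\cA$ is a lattice, with $\mu_i(x)\vee\mu_j(y)=\mu_{j+2}(z)$ and dually for meets.

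For the universal property, let $L$ be a lattice and $f\colon P\to L$ a $(\cU,\cD)$-morphism. Applying Proposition \ref{P:univA} for every $n$ yields order-preserving $(\cU_k,\cD_k)$-morphisms $f_k\colon\cA_k\to L$ ($k\ge1$) with $f_k\circ\gamma_{k-1}=f_{k-1}$ and $f_1\circ\gamma_0=f$; together with $f_0=f$ these form a cocone over $F$, so the colimit supplies a unique order-preserving $u\colon\cA\to L$ with $u\circ\mu_k=f_k$ for all $k$, hence $u\circ\mu_0=f$. That $u$ is a lattice homomorphism follows from the lattice computation above: given $\mu_i(x),\mu_j(y)$ with $i\le j$, their join is $\mu_{j+2}(z)$ with $z$ the join in $\cA_{j+2}$ of two elements of $\pi_{Y_{j+1}}[Y_{j+1}]$ (being elements of $\gamma_{j+1}[\cA_{j+1}]$), and $f_{j+2}$, being a $(\cU_{j+2},\cD_{j+2})$-morphism, is $\omega$-join-preserving on $\pi_{Y_{j+1}}[Y_{j+1}]$ by Lemma \ref{L:morphEquiv}; thus $u(\mu_i(x)\vee\mu_j(y))=f_{j+2}(z)=u(\mu_i(x))\vee u(\mu_j(y))$, and dually for meets. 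For uniqueness, if $v\colon\cA\to L$ is any lattice homomorphism with $v\circ\mu_0=f$, then each $v\circ\mu_n$ is $\omega$-meet-preserving on $\pi_{X_{n-1}}[X_{n-1}]$ and $\omega$-join-preserving on $\pi_{Y_{n-1}}[Y_{n-1}]$, hence a $(\cU_n,\cD_n)$-morphism by Lemma \ref{L:morphEquiv}, and it extends $f$; by the uniqueness clause of Proposition \ref{P:univA} we get $v\circ\mu_n=f_n$ for all $n$, whence $v=u$ by the uniqueness part of the colimit. So $(\cA,\mu_0)$ enjoys the property characterizing $\FUD$, and the claimed isomorphism $\phi\colon\cA\to\FUD$ with $\phi\circ\mu_0=\mu$ exists by uniqueness of the free lattice.

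The main obstacle is the lattice claim, and with it the verification that $u$ is a homomorphism: one has to check that the finite joins and meets created at each stage are genuinely stable under all subsequent transition maps, which is where the precise choice of $\cU_n=\cD_n$ as the finite subsets of $\gamma_{n-1}[\cA_{n-1}]$ and Lemma \ref{L:gamma} do the work, and Lemma \ref{L:morphEquiv} is the bridge turning the combinatorial $(\cU_n,\cD_n)$-morphism condition into the order-theoretic preservation statements needed for both the existence and the uniqueness of $u$.
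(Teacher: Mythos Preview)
Your argument is correct and follows essentially the same route as the paper's proof: verify that $\cA$ is a lattice via Proposition~\ref{P:coLim}(4), that $\mu_0$ is a $(\cU,\cD)$-embedding via Lemma~\ref{L:muMorph}, and that the universal property holds by producing the cocone $(f_k)$ from Proposition~\ref{P:univA}, applying the colimit property, and checking that the induced map is a lattice homomorphism and is unique. The only notable difference is an off-by-one in the lattice step: since the pair already lies in $\cU_{j+1}=\cD_{j+1}$, the join and meet in fact exist in $\cA_{j+1}$ (not first in $\cA_{j+2}$), and the paper exploits this directly; your extra step through $e_{Y_{j+1}}$ and $\pi_{Y_{j+1}}$ is harmless but unnecessary, and correspondingly the homomorphism computation can be run at stage $j+1$ using $f_{j+1}$ rather than $f_{j+2}$.
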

\begin{proof}

Let $L$ be a lattice and let $h:P\to L$ be a $(\cU, \cD)$-morphism. By Proposition \ref{P:univA} we obtain maps $h_1,h_2,\ldots$ such that the relevant parts of the diagram in Figure \ref{F:buildF} commutes. Thus, from the universal property of colimits we obtain a unique map $h^*:\cA\to L$ making the whole diagram commute. We must check that $\cA$ is a lattice, and that $h^*$ is the unique lattice homomorphism such that $h = h^*\circ \mu_0$.

So, let $x,y\in \cA$. Then there is $n$ such that $x,y\in\cA_n$ with $x\vee y$ and $x\wedge y$ defined in $\cA_n$, and with $\{x,y\}\in\cU_k=\cD_k$ for all $k\geq n$. As $\gamma_{nk}(x\vee y) = \gamma_{nk}(x)\vee \gamma_{nk}(y)$ for all $k\geq n$, by Lemma \ref{L:gamma2}, and similar for $\wedge$, it follows from Proposition \ref{P:coLim}(4) that $\cA$ is a lattice.

Moreover, since $x\vee y \in \cU_n$ and $h_n$ is a $(\cU_n,\cD_n)$-morphism (by Proposition \ref{P:univA}), it follows from the commutativity of the diagram that $h^*(x\vee y) = h^*(x)\vee h^*(y)$. Similar holds for $x\wedge y$, and thus $h^*$ is a lattice homomorphism.  

Finally, if $g:\cA\to L$ is a lattice homomorphism with $h = g\circ \mu_0$, then, for all $n$, the restriction of $g$ to $\cA_n$ is a $(\cU_n,\cD_n)$-morphism making the relevant part of the diagram in Figure \ref{F:buildF} commute, and so must be $h_n$, by Proposition \ref{P:univA}. It follows from the universal property of colimits that $g$ must be $h^*$. Thus $\mu_0:P\to \cA$ is the required free lattice (up to isomorphism), and we have the result.
\end{proof}

\section{Approximate lattice extensions}\label{S:approx}
The step by step construction of $\FUD$ from Section \ref{S:build} can be thought of as a sequence of increasingly good approximations. If $P$ is finite, then the free lattice $\FUD$ may not be. For example, the free lattice generated by a three element set is known to be infinite (see e.g. \cite[Theorem 1.28]{FJN95}). However, if $P$ is finite then $\cA_n$ will also be finite for each $n\in\omega$. Moreover, the map $\mu_n:\cA_n\to\cA$ is an order-embedding, and also preserves the meets and joins of all finite subsets of $\gamma_{n-1}[\cA_{n-1}]$. Thus, while each $\cA_n$ contains only a finite portion of the $(\cU,\cD)$-free lattice structure generated by $P$, there is a guarantee that much of what is contained in $\cA_n$ is correct. 

It follows that reasoning involving only terms of `bounded complexity', in a sense to be made precise in this section, can be done in $\cA_n$ for large enough $n$. For a simple example, it is obvious from this that the word problem for free lattices is solvable; Given terms $s$ and $t$ we can check whether $s\leq t$ `merely' by constructing $\cA_1,\cA_2,\ldots$ till we get to $\cA_n$ containing both $s$ and $t$, then checking whether $s\leq t$ in $\cA_n$. This is of course not a practical approach (see \cite[Chapter 9.8]{FJN95} for a discussion of algorithms for this problem).   

We can modify the result of Section \ref{S:build} to show that each stage $\cA_n$ also satisfies a kind of universal property. In this sense, these finite approximations to the free lattice are free objects themselves, albeit for a rather restrictive class.  We need some technical definitions to make this precise.
\begin{defn}
For each $2\leq n<\omega$ define $n$-ary operation symbols $\bv_n$ and $\bw_n$.
\end{defn}

\begin{defn}
Let $T$ be a set. Define \textbf{$T$-terms} recursively as follows: 
\begin{itemize}
\item If $t\in T$ then $t$ is a $T$-term.
\item If $2\leq n<\omega$ and $\phi_1,\ldots,\phi_n$ are $T$-terms, then $\bv_n(\phi_1,\ldots,\phi_n)$ and $\bw_n(\phi_1,\ldots,\phi_n)$ are $T$-terms.
\end{itemize}
We define the \textbf{complexity of $T$-terms} recursively as follows:
\begin{itemize}
\item If $t\in T$ then the complexity of $t$ is 0.
\item If $\phi_1,\ldots,\phi_n$ are $T$-terms with complexities $c_1,\ldots,c_n$ then $\bv_n(\phi_1,\ldots,\phi_n)$ and $\bw_n(\phi_1,\ldots,\phi_n)$ have complexity $\max(c_1,\ldots,c_n) + 1$.
\end{itemize}
\end{defn}

\begin{defn}
Let $Q$ be a poset, let $T\subseteq Q$, and let $\phi$ be a $T$-term. We define what it means for $q\in Q$ to \textbf{correspond to $\phi$} (or, equivalently, for $q$ to be a correspondent for $\phi$) as follows:
\begin{itemize}
\item If $\phi = t$ for some $t\in T$, then $q$ corresponds to $\phi$ if and only if $q= t$.
\item Suppose that $\phi = \bv_n(\phi_1,\ldots,\phi_n)$, and that $\phi_i$ is a $T$-term with correspondent $q_i$ for each $i\in\{1,\ldots,n\}$. Then $q$ corresponds to $\phi$ if and only if $q = \bv\{q_1,\ldots,q_n\}$.
\item Suppose that $\phi = \bw_n(\phi_1,\ldots,\phi_n)$, and that $\phi_i$ is a $T$-term with correspondent $q_i$ for each $i\in\{1,\ldots,n\}$. Then $q$ corresponds to $\phi$ if and only if $q = \bw\{q_1,\ldots,q_n\}$.
\end{itemize}
\end{defn}

Note that an easy inductive argument shows that a $T$-term has a unique correspondent, if it has one at all. However, an element $q\in Q$ may correspond to more than one $T$-term. 

\begin{defn}
Let $Q$ be a poset, let $k\in\omega\cup\{\omega\}$, and let $T\subseteq Q$. Then $Q$ is \textbf{$k$-complete relative to $T$} if, for all $k'<k$, every $T$-term of complexity $k'$ has a correspondent in $Q$.
\end{defn}

Note that every poset is trivially $1$-complete relative to every subset, as the terms with complexity 0 are just the elements of the subset.

\begin{defn}
Let $Q$ be a poset, and let $T\subseteq Q$. Given $q\in Q$, define $\rank_T(q)$ to be the least $n\in\omega$ such that $q$ corresponds to a $T$-term $\phi$ of complexity $n$, if such a $\phi$ exists, otherwise leave it undefined. 
\end{defn}

\begin{prop}
Let $P$ be a poset, let $1\leq n< \omega$, let $\gamma_{0n}:P\to\cA_n$ be as defined in Section \ref{S:build}, and let $\mu_0:P\to \cA$ be defined as in Theorem \ref{T:isColimit}. Then:
\begin{enumerate}
\item If $q\in \cA_n$ then $\rank_{\gamma_{0n}[P]}(q)\leq n$.  
\item $\cA_n$ is $(n+1)$-complete relative to $\gamma_{0n}[P]$.
\item If $q\in \cA$ then $\rank_{\mu_0[P]}(q)$ is finite.
\item $\cA$ is $\omega$-complete relative to $\mu_0[P]$.
\item Let $q\in \cA$ and let $n\in\omega$. Then $\rank_{\mu_0[P]}(q) = n$ if and only if $n$ is the smallest number such that there is $q'\in\cA_n$ with $\mu_n(q') = q$.
\end{enumerate}
\end{prop}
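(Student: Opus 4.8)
My plan is to prove all five parts by working inside the colimit $\cA$ --- which is a lattice by Theorem \ref{T:isColimit}, so every $\mu_0[P]$-term automatically has a correspondent there --- and then transferring statements back and forth between $\cA$ and the stages $\cA_n$ using that each $\mu_n$ is an order-embedding (Lemma \ref{L:muMorph}). The first of two auxiliary facts I would establish is: \emph{for every $n\in\omega$ and every $q\in\cA_n$, the element $\mu_n(q)$ corresponds in $\cA$ to some $\mu_0[P]$-term of complexity at most $n$.} This goes by induction on $n$ (the case $n=0$ being trivial). For the step, I would first use Corollary \ref{C:bigLift}, the $\omega$-supportedness of $e_{X_{n-1}},e_{Y_{n-1}}$ (Lemma \ref{L:cont}(2)) and the complete meet/join-preservation of $\pi_{X_{n-1}},\pi_{Y_{n-1}}$ to see that any $q\in\cA_n$ is either of the form $\gamma_{n-1}(w)$ with $w\in\cA_{n-1}$ (the degenerate empty-meet/join cases, i.e. top or bottom elements of $X_{n-1},Y_{n-1}$, fall in here), or else a finite meet or a finite join, computed in $\cA_n$, of a finite set $\{\gamma_{n-1}(s_i)\}\subseteq\gamma_{n-1}[\cA_{n-1}]$. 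In the first case $\mu_n(q)=\mu_{n-1}(w)$ and the induction hypothesis applies; in the second, since $\{\gamma_{n-1}(s_i)\}$ is a finite subset of $\gamma_{n-1}[\cA_{n-1}]$, hence a member of $\cU_n=\cD_n$, and $\mu_n$ is a $(\cU_n,\cD_n)$-embedding with $\mu_n\circ\gamma_{n-1}=\mu_{n-1}$, we get $\mu_n(q)=\bw_\cA\{\mu_{n-1}(s_i)\}$ (or the dual join), so the induction hypothesis applied to each $\mu_{n-1}(s_i)$ produces a term of complexity at most $n$ for $\mu_n(q)$.

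The second auxiliary fact is: \emph{for every $k\in\omega$, the correspondent in $\cA$ of a $\mu_0[P]$-term of complexity at most $k$ lies in $\mu_k[\cA_k]$.} Again by induction on $k$: given a term $\bw_l(\Phi_1,\ldots,\Phi_l)$ of complexity $\le k$, its subterms $\Phi_i$ have complexity $\le k-1$, so by the induction hypothesis their correspondents lie in $\mu_{k-1}[\cA_{k-1}]=\mu_k[\gamma_{k-1}[\cA_{k-1}]]\subseteq\mu_k[\cA_k]$; write them as $\mu_k(q_i)$. Then $\{\gamma_k(q_i)\}$ is a finite subset of $\gamma_k[\cA_k]$, so $\bw_{\cA_{k+1}}\{\gamma_k(q_i)\}$ is defined in $\cA_{k+1}$ --- it is the image under $\pi_{X_k}$ of the finitely generated $\cD_k$-filter of $\cA_k$ generated by $\{q_i\}$ --- and, being a $\cD_{k+1}$-meet, it is preserved by the $(\cU_{k+1},\cD_{k+1})$-embedding $\mu_{k+1}$; hence the correspondent of $\bw_l(\Phi_1,\ldots,\Phi_l)$ equals $\mu_{k+1}\big(\bw_{\cA_{k+1}}\{\gamma_k(q_i)\}\big)\in\mu_{k+1}[\cA_{k+1}]$. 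The join case is dual.

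Granting these two facts, the five parts follow. Part (3) is immediate: write $q=\mu_n(q')$ via Proposition \ref{P:coLim}(2) and apply the first fact. Part (4) uses only that $\cA$ is a lattice, by a routine induction on complexity. For (1) and (2) I would argue by a \emph{pull-back}: given a $\mu_0[P]$-term $\Phi$ of complexity $\le n$ with correspondent $a$ in $\cA$, the second fact places $a$, and the correspondents of all subterms of $\Phi$, inside $\mu_n[\cA_n]$; since $\mu_n$ is an order-embedding it reflects the meets and joins occurring in $\Phi$, so a structural induction on $\Phi$ shows that the unique $q\in\cA_n$ with $\mu_n(q)=a$ corresponds, in $\cA_n$, to the $\gamma_{0n}[P]$-term obtained from $\Phi$ by pulling the leaves back along $\mu_n$ (a term of the same complexity). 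Applying this to the term produced by the first fact for $\mu_n(q)$ gives (1), and applying it to an arbitrary $\gamma_{0n}[P]$-term of complexity $\le n$ --- first pushed into $\cA$ and evaluated there --- gives (2). For (5), let $m$ be least with $q\in\mu_m[\cA_m]$ (well-defined since $\mu_k[\cA_k]\subseteq\mu_{k+1}[\cA_{k+1}]$); the first fact gives $\rank_{\mu_0[P]}(q)\le m$, while the second fact applied to a complexity-$\rank_{\mu_0[P]}(q)$ term for $q$ gives $q\in\mu_{\rank_{\mu_0[P]}(q)}[\cA_{\rank_{\mu_0[P]}(q)}]$, so $m\le\rank_{\mu_0[P]}(q)$; thus $m=\rank_{\mu_0[P]}(q)$, which is exactly the assertion of (5).

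The main obstacle is the second auxiliary fact, specifically the bookkeeping that the operand set arising at each level genuinely belongs to the specification preserved by the relevant map: the maps $\gamma_k$ and $\mu_k$ do \emph{not} preserve arbitrary finite meets and joins, only those whose operands lie in $\gamma_{k-1}[\cA_{k-1}]=\cU_k=\cD_k$, so the induction must be arranged so that the partial correspondent at complexity $k$ always lands in $\mu_k[\cA_k]$ --- equivalently, its $\gamma_k$-image in $\gamma_k[\cA_k]$ --- before being combined again one level higher. A secondary, minor issue is the degenerate empty meet/join (a top or bottom of some $X_n$ or $Y_n$), which is cleanly absorbed into the case $q\in\gamma_{n-1}[\cA_{n-1}]$.
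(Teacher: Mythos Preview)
Your overall strategy is sound and matches the paper's: both arguments rest on an induction on the level $n$ showing that elements of $\cA_n$ have rank at most $n$ (your first auxiliary fact, the paper's part (1)), and a companion induction showing that terms of bounded complexity are realized at the corresponding level (your second auxiliary fact, the paper's part (2)). The paper carries out these inductions directly inside the $\cA_n$ and then derives (3)--(5), giving a separate explicit induction for (5); you instead work uniformly inside the colimit $\cA$ and derive (5) immediately from the two auxiliary facts via the sandwich $m\le\rank(q)\le m$. Your organization is arguably tidier, especially for (5), and the pull-back argument for (1) and (2) via the order-embedding $\mu_n$ is a pleasant way to avoid redoing the inductions inside each $\cA_n$.

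There is, however, an indexing slip in your proof of the second auxiliary fact that matters for (5). You write the subterm-correspondents as $\mu_k(q_i)$ with $q_i\in\gamma_{k-1}[\cA_{k-1}]$, then apply $\gamma_k$ and take the meet in $\cA_{k+1}$, concluding only that the correspondent lies in $\mu_{k+1}[\cA_{k+1}]$. With that weaker conclusion your sandwich in (5) becomes $m\le\rank(q)+1$, which does not pin down $m$. The fix is immediate: since the $q_i$ already lie in $\gamma_{k-1}[\cA_{k-1}]$, the set $\{q_i\}$ belongs to $\cD_k$, so $\bw_{\cA_k}\{q_i\}$ exists in $\cA_k$ (this is exactly the $\pi_{X_{k-1}}$-image of the finitely generated $\cD_{k-1}$-filter you mention, one level lower than you wrote), and the $\cD_k$-morphism $\mu_k$ sends it to $\bw_\cA\{\mu_k(q_i)\}$. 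That lands the correspondent in $\mu_k[\cA_k]$ as your statement claims, and then (5) goes through as you wrote it.
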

\begin{proof}
Parts (1) and (2) can be proved by easy inductions on $n$. Parts (3) and (4) then follow from the fact that, for all $n\in\omega$, the map $\mu_n:\cA_n\to\cA$ is a $(\cU_n,\cD_n)$-embedding (by Lemma \ref{L:muMorph}), and for every $q\in\cA$ we have $q = \mu_n(q')$ for some $n\in\omega$ and $q'\in\cA_n$. 

Part (5) also follows by an induction argument. The case where $n=0$ is trivial, so suppose $n>0$ and that the claim holds for all $m<n$, and let $q\in \cA$. Suppose first that $\rank_{\mu_0[P]}(q)=n$, and let $\phi$ be a $\mu_0[P]$-term of complexity $n$ to which $q$ corresponds. Suppose without loss of generality that $\phi = \bv_k(\phi_1,\ldots,\phi_k)$ for some $\mu_0[P]$-terms $\phi_1,\ldots,\phi_k$, each of which has complexity of at most $n-1$. For each $i\in\{1,\ldots,k\}$ let $q_i\in\cA$ be the correspondent of $\phi_i$. Then, for all $i\in\{1,\ldots,k\}$ we have $\rank_{\mu_0[P]}(q_i) < n$.  

Now, by the inductive hypothesis, for each $i\in\{1,\ldots,k\}$ there is $n_i< n$ and $q'_i\in\cA_{n_i}$, with $q_i = \mu_{n_i}(q'_i)$. Let $n' = \max(n_1,\ldots,n_k)$. As $q$ corresponds to $\phi$, there must be $q'\in \cA_{n'+1}$ such that $\mu_{n'+1}(q') = q$. Moreover, if there were $n''<n$ and $q''\in\cA_{n''}$ such that $\mu_{n''}(q'') = q$ then, also by the inductive hypothesis, we would have $\rank_{\mu_0[P]}(q) < n$, contradicting the assumption that $\rank_{\mu_0[P]}(q) = n$. It follows that $n'+1 = n$, and that $n$ is indeed the smallest number such that there is $q'\in\cA_n$ with $\mu_n(q') = q$.

For the converse, suppose $n$ is the smallest number such that there is $q'\in\cA_n$ with $\mu_n(q') = q$. Then there are $q'_1,\ldots,q'_k\in\cA_{n-1}$ such that either $q' = \bv\{\gamma_{n-1}(q'_1),\ldots,\gamma_{n-1}(q'_k)\}$, or $q' = \bw\{\gamma_{n-1}(q'_1),\ldots,\gamma_{n-1}(q'_k)\}$. Now, for each $i\in\{1,\ldots,k\}$ let $q_i = \mu_{n-1}(q'_i)$, and let $\phi_i$ correspond to $q_i$ and have minimal complexity. Suppose without loss of generality that $q' = \bv\{\gamma_{n-1}(q'_1),\ldots,\gamma_{n-1}(q'_k)\}$. Then $q$ corresponds to $\bv_k(\phi_1,\ldots,\phi_k)$, and, as $\rank_{\mu_0[P]}(q_i) < n$ for all $i\in\{1,\ldots,k\}$, it follows that $\rank_{\mu_0[P]}(q) \leq n$. Moreover, if $\rank_{\mu_0[P]}(q) < n$ then, by the inductive hypothesis, $n$ could not be minimal as assumed. It follows that, if $n$ is the smallest number such that there is $q'\in\cA_n$ with $\mu_n(q') = q$, then $\rank_{\mu_0[P]}(q) = n$.          
\end{proof}

\begin{thm}\label{T:rel-free}
Let $P$ and $Q$ be posets, let $n\in \omega$, and let $\cA_n$ be as defined in Section \ref{S:build}. Let $f:P\to Q$ be a $(\cU,\cD)$-morphism, and suppose $Q$ is $(n+1)$-complete relative to $f[P]$. Then there is a unique $(\cU_n,\cD_n)$-morphism $f^*:\cA_n\to Q$ such that $f^*\circ \gamma_{0n} = f$. 
\end{thm}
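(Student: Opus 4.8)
The plan is to adapt the proof of Proposition \ref{P:univA}. I will construct, by induction on $k=0,1,\ldots,n$, order-preserving maps $f_k\colon\cA_k\to Q$ with $f_0=f$, each $f_k$ a $(\cU_k,\cD_k)$-morphism, and $f_{k+1}\circ\gamma_k=f_k$ for $k<n$; then $f^{*}:=f_n$ satisfies $f^{*}\circ\gamma_{0n}=f_n\circ\gamma_{n-1}\circ\cdots\circ\gamma_0=\cdots=f_0=f$, giving existence. The new ingredient, since $Q$ need not be a lattice, is that the uses of Lemma \ref{L:enough} must be replaced by the hypothesis that $Q$ is $(n+1)$-complete relative to $f[P]$; to exploit this I strengthen the inductive hypothesis to also require that $f_k$ sends every element of $\cA_k$ to a correspondent in $Q$ of some $f[P]$-term of complexity at most $k$. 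The base case $k=0$ is immediate: $(\cU_0,\cD_0)=(\cU,\cD)$, so $f_0=f$ is a $(\cU_0,\cD_0)$-morphism, and $f(p)\in f[P]$ is the correspondent of the complexity-$0$ term $f(p)$.

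For the inductive step, assume $k<n$ and $f_k$ is given. It is immediate that $e_{X_k}$ and $e_{Y_k}$ are $\omega$-supported meet- and join-extensions (each element of $X_k$ is a finitely generated $\cD_k$-filter, hence equal to $\bw e_{X_k}[S]$ for some finite non-empty $S\subseteq\cA_k$, and dually for $Y_k$). The key point is that $\bw f_k[S]$ and $\bv f_k[S]$ exist in $Q$ for every finite non-empty $S\subseteq\cA_k$: by the strengthened hypothesis $f_k[S]$ is a finite set of correspondents of $f[P]$-terms of complexity $\le k$, so $\bw f_k[S]$ is the correspondent of an $f[P]$-term of complexity $\le k+1\le n$, which exists because $Q$ is $(n+1)$-complete relative to $f[P]$. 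Since $f_k$ is moreover a $\cD_k$-morphism, the argument of Lemma \ref{L:cont}(3) shows $f_k$ is $(\omega,e_{X_k})$-continuous, and dually $(\omega,e_{Y_k})$-continuous; and the argument of Lemma \ref{L:enough}, with ``$L$ is a lattice'' replaced by ``$\bw f_k[S]$ exists for finite non-empty $S$'', shows $f_k$ has enough meets for $e_{X_k}$ and enough joins for $e_{Y_k}$. Corollary \ref{C:lift} (with $\alpha=\beta=\omega$) then yields an order-preserving $f_{k+1}\colon\cA_{k+1}\to Q$ with $f_{k+1}\circ\gamma_k=f_k$ that is $\omega$-meet-preserving on $\pi_{X_k}[X_k]$ and $\omega$-join-preserving on $\pi_{Y_k}[Y_k]$; by (the proof of) Lemma \ref{L:morphEquiv}, which does not use that the codomain is a lattice, this is exactly the assertion that $f_{k+1}$ is a $(\cU_{k+1},\cD_{k+1})$-morphism. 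The strengthened hypothesis persists: by Corollary \ref{C:bigLift} every $z\in\cA_{k+1}$ lies in $\pi_{X_k}[X_k]\cup\pi_{Y_k}[Y_k]$; if $z=\pi_{X_k}(x)$ with $x$ the smallest $\cD_k$-filter generated by a finite non-empty $S\subseteq\cA_k$, then $f_{k+1}(z)=\bw f_k[e_{X_k}^{-1}(x^\uparrow)]=\bw f_k[S]$ (unwinding Corollary \ref{C:lift} via Proposition \ref{P:lift} and the computation in Lemma \ref{L:enough}), which is a correspondent of an $f[P]$-term of complexity $\le k+1$; the case $z\in\pi_{Y_k}[Y_k]$ is dual.

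For uniqueness, let $g\colon\cA_n\to Q$ be a $(\cU_n,\cD_n)$-morphism with $g\circ\gamma_{0n}=f$, and put $g_k=g\circ\gamma_{kn}$. As in the proof of Lemma \ref{L:muMorph}, $\gamma_{kn}$ takes each $\cU_k$-join to the $\cU_n$-join of its image and each $\cD_k$-meet to the $\cD_n$-meet of its image, so each $g_k$ is a $(\cU_k,\cD_k)$-morphism (for $k=0$, $g_0=f$ directly), with $g_0=f$ and $g_{k+1}\circ\gamma_k=g_k$. An induction now gives $g_k=f_k$ for all $k$: $g_0=f=f_0$, and if $g_k=f_k$ then $g_{k+1}$ is a $(\cU_{k+1},\cD_{k+1})$-morphism with $g_{k+1}\circ\gamma_k=f_k$, hence (Lemma \ref{L:morphEquiv}) $\omega$-meet-preserving on $\pi_{X_k}[X_k]$ and $\omega$-join-preserving on $\pi_{Y_k}[Y_k]$, so $g_{k+1}=f_{k+1}$ by the uniqueness clause of Corollary \ref{C:lift} (applicable since $f_k$ is $(\omega,e_{X_k})$- and $(\omega,e_{Y_k})$-continuous). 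Therefore $g=g_n=f_n=f^{*}$.

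I expect the main difficulty to be the second paragraph: organising the ``bounded-complexity correspondent'' invariant so that at each stage $f_k$ meets all the hypotheses of Corollary \ref{C:lift} --- enough meets and joins, and $(\omega,e_{X_k})$- and $(\omega,e_{Y_k})$-continuity --- even though $Q$ is not a lattice. The crux is that the meets and joins of $f_k$-images that are needed genuinely exist in $Q$ precisely because the relevant terms have complexity $\le n$, which is exactly what $(n+1)$-completeness relative to $f[P]$ provides.
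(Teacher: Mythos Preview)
Your proposal is correct and follows essentially the same approach as the paper: the paper's proof is a three-sentence sketch that says ``run the argument of Proposition~\ref{P:univA}, observing that the meets and joins needed for Lemma~\ref{L:cont}(3) and Corollary~\ref{C:lift} exist by $(n+1)$-completeness relative to $f[P]$'', and you have carried out exactly that plan in full. Your explicit strengthened inductive invariant---that $f_k$ lands in correspondents of $f[P]$-terms of complexity at most $k$---is precisely the ``little reflection'' the paper leaves to the reader, and your observations that the proofs of Lemmas~\ref{L:enough} and~\ref{L:morphEquiv} do not actually use the lattice hypothesis on the codomain are the right way to make the transfer rigorous.
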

\begin{proof}
First, if $n=0$ then $\cA_n = P$ and the result is trivial. Suppose then that $n>0$, and that the claim holds for all $k<n$. The argument now is essentially that of Proposition \ref{P:univA}. The only difference is that, as $Q$ is not a lattice, it is not immediately obvious that $Q$ has the required joins and meets. However, a little reflection reveals that the satisfaction of these conditions to a degree sufficient to prove the claimed result follows from the fact that $Q$ is $(n+1)$-complete relative to $f[P]$.  
\end{proof}

Theorem \ref{T:rel-free} says, in a sense, that $\cA_n$ is the free poset generated by $P$ (while preserving certain bounds) that has lattice structure up to a certain level of complexity, if using elements of $P$ as a base. 

In the case where $P$ is an antichain, there is a well known `canonical form' theorem, which, in our notation, produces for each $q\in\cA$ a $\mu_0[P]$-term corresponding to $q$ that is minimal with respect to a certain measure of complexity, and this term is `unique up to commutativity' (see e.g. \cite[Theorem 1.17]{FJN95}). Unfortunately, this theorem does not hold for posets in general, as we illustrate in Example \ref{E:canon}.

\begin{ex}\label{E:canon}
Let $P$ be the poset in Figure \ref{F:canon}, let $\cU$ and $\cD$ contain, respectively, all joins and meets that are defined in $P$, and consider the element $a\vee b$. This is not defined in $P$, but is defined in $\cA_1$, and is, in the construction of $\cA_1$ using $\cU$-ideals and $\cD$-filters, the smallest $\cU$-ideal containing $\{a,b\}$. Inspection reveals this is the whole of $P$. Now, the smallest $\cU$-ideal containing $\{x,y\}$ is also the whole of $P$, and thus $a\vee b = x\vee y$. But $\{a,b\}$ and $\{x,y\}$ are disjoint, and there is no natural reason to choose one over the other as the basis for a canonical term for the element corresponding to the join in $\cA_1$. Since $\cA_1$ correctly represents the joins of elements of $P$ in the colimit $\cA$, this argument reveals that a canonical form theorem such as exists for free lattices over sets does not exist in this more general setting.
\end{ex}

\begin{figure}
\[\xymatrix{& \bullet_a\ar@{-}[ddl]\ar@{-}[ddd] & & \bullet_b & & &  \bullet_x & & \bullet_y\ar@{-}[ddd]\ar@{-}[ddr] \\
& & \bullet\ar@{-}[dll]\ar@{-}[urrrr] & & & & & \bullet\ar@{-}[drr]\ar@{-}[ullll]  \\
\bullet & & \bullet\ar@{-}[dl]\ar@{-}[uurrrrrr] & & & & & \bullet\ar@{-}[dr]\ar@{-}[uullllll] & & \bullet \\
& \bullet & & \bullet\ar@{-}[ul]\ar@{-}[uul]\ar@{-}[uuu] & & &  \bullet\ar@{-}[uur]\ar@{-}[ur]\ar@{-}[uuu] & & \bullet
}\] 
\caption{The bat signal poset}
\label{F:canon}
\end{figure}

%\paragraph{Data availability} This paper contains no data.
%\bibliography{../../../../../references}{}
\bibliographystyle{abbrv}

\end{document}